\newcommand{\Z}{{\mathbb{Z}}}
\newcommand{\R}{{\mathbb{R}}}
\newtheorem{thm}{Theorem} 
\newtheorem*{spec}{Theorem 1$'$}
\begin{document}

\title{Realizing homology classes up to cobordism}

\subjclass[2010]{55N22, 57R95, 57R42, 55P47, 57R19}

\keywords{immersions, cobordism, infinite loop spaces, realizing homology classes, singular maps}

\author{Mark Grant}
\address{Institute of Mathematics, University of Aberdeen, Fraser Noble Building, Aberdeen AB24 3UE}
\email{mark.grant@abdn.ac.uk}

\author{Andr\'as Sz\H ucs}
\address{ELTE Analysis Department, 1117 Budapest, P\'azm\'any P\'eter s\'et\'any 1/C, Hungary}
\email{szucs@math.elte.hu}

\author{Tam\'as Terpai}
\address{Alfr\'ed R\'enyi Institute of Mathematics, 1053 Budapest, Re\'altanoda u. 13-15., Hungary}
\email{terpai@math.elte.hu}

\begin{abstract}
It is known that neither immersions nor maps with a fixed finite set of multisingularities are enough to realize all mod $2$ homology classes in manifolds. In this paper we define the notion of realizing a homology class up to cobordism; it is shown that for realization in this weaker sense immersions are sufficient, but maps with a fixed finite set of multisingularities are still insufficient.
\end{abstract}


\maketitle

\section{Introduction}

In $1949$ Steenrod \cite{Eilenberg} posed the following question: given a homology class $h$ of a space $X$, does there exist a closed manifold $V$ and a continuous map $f: V \to X$ such that $f_*[V]=h$, where $[V]$ is the fundamental class of $V$? Thom's famous result answers the question affirmatively if $h$ is a $\Z_2$-homology class, and shows that for integral homology the answer in general is negative. It is a natural further question whether $f$ can be chosen to be ``nice'' if $X$ itself is a smooth manifold. For example, can it be always an embedding or an immersion? If not, then can $f$ be chosen to have only mild singularities?

For embeddings Thom himself gave some necessary and sufficient conditions. From these conditions it is not hard to deduce that there are $\Z_2$-homology classes of codimension $2$ not realizable by embeddings in some manifolds.

In \cite{GrantSzucs} it was shown that for any $k>1$ there is a manifold $M$ (of dimension approximately $4k$) and a cohomology class $\alpha \in H^k(M;\Z_2)$ such that the Poincar\'e dual of $\alpha$ cannot be realized by an immersion. Moreover it was shown there that for any $k>1$ singular maps of finite complexity (see Section \ref{sec:singular} for the precise definition)  are insufficient to realize all codimension $k$ homology classes in manifolds.

Therefore in order to obtain positive answers it is natural to relax the notion of ``realization of a homology class''. The relaxed version we use will be ``realization up to cobordism''. For this purpose we define the cobordism group of pairs $(M^n,\alpha)$ where $M^n$ is a closed smooth $n$-manifold and $\alpha \in H^k(M;\Z_2)$ for a fixed $k$.

\par
{\sc Definition:} Given two pairs $(M^n,\alpha)$ and $(N^n,\beta)$ we say that they are \emph{cobordant} if there is a pair $(W^{n+1},\gamma)$ such that $W^{n+1}$ is a compact $(n+1)$-manifold with boundary $\partial W = M \sqcup N$ and $\gamma \in H^k(W;\Z_2)$ is a cohomology class such that $\gamma|_M = \alpha$ and $\gamma|_N = \beta$.
\par

{\sc Remark:} The obtained group of pairs is clearly isomorphic to $\mathfrak N_n(K(\Z_2,k))$, the $n$th bordism group of the Eilenberg-MacLane space $K(\Z_2,k)$.

\par
{\sc Definition:} Let $\mathcal F$ be a class of smooth maps (for example, embeddings, immersions, or singular maps of some given complexity). We say that a pair $(M,\alpha)$ is \emph{$\mathcal F$-realizable} if there exist a closed manifold $V$ and a map $f:V \to M$ such that $f \in \mathcal F$ and $f_*[V]$ is Poincar\'e dual to $\alpha$. We say that $(M,\alpha)$ is \emph{$\mathcal F$-realizable up to cobordism} if there is an $\mathcal F$-realizable pair $(N,\beta)$ cobordant to $(M,\alpha)$.

\par
We show that this relaxation allows to give a positive answer in the case of immersions but for singular maps of finite complexity the answer remains negative.

\section{Realization by immersions}\label{sec:basic}

\begin{thm}\label{thm:main}
Any pair $(M,\alpha)$ is realizable by immersions up to cobordism.
\end{thm}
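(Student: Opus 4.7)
The strategy is to reduce Theorem~\ref{thm:main} to a surjectivity statement about the Thom spectrum $MO$, and then to invoke transversality. Consider the mod~$2$ Thom class of the universal bundle $\gamma_k\to BO(k)$, a map of spaces $\tau\colon MO(k) \to K(\Z_2,k)$, which induces
\[
\tau_*\colon \mathfrak N_n(MO(k)) \longrightarrow \mathfrak N_n(K(\Z_2,k)).
\]
The first main step is to show that $\tau_*$ is surjective. Thom's splitting $MO\simeq\bigvee_i \Sigma^{d_i}H\Z_2$ of spectra gives a natural isomorphism $\mathfrak N_*(Y)\cong \mathfrak N_*\otimes_{\Z_2} H_*(Y;\Z_2)$ for any space $Y$, so surjectivity of $\tau_*$ on $\mathfrak N_*$ is equivalent to surjectivity of $\tau$ on mod~$2$ homology. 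Dually, this amounts to the injectivity of
\[
\tau^*\colon H^*(K(\Z_2,k);\Z_2)\longrightarrow H^*(MO(k);\Z_2),\qquad \iota_k\mapsto u,
\]
which I would deduce from the Serre--Cartan description of $H^*(K(\Z_2,k);\Z_2)$ as a polynomial algebra on $\{\mathrm{Sq}^I\iota_k : I\text{ admissible},\ e(I)<k\}$, combined with Wu's formula $\mathrm{Sq}^i u=w_i u$ for $i\le k$ and the matching unstable relations $\mathrm{Sq}^i u=0=\mathrm{Sq}^i\iota_k$ for $i>k$ on both sides.

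Granted the surjectivity of $\tau_*$, the theorem follows at once. Given $(M^n,\alpha)$, choose $(N^n, g\colon N\to MO(k))$ representing a preimage of $[(M,\alpha)]$, so that $(N,\tau\circ g)$ is cobordant to $(M,\alpha)$ as pairs. Replacing $MO(k)$ by a finite-dimensional approximation $MO(k,l)=\mathrm{Th}(\gamma_{k,l}\to G_k(\R^l))$ for $l$ large, I deform $g$ to be smooth and transverse to the zero section $G_k(\R^l)\subset MO(k,l)$. The preimage $V:=g^{-1}(G_k(\R^l))$ is then a closed codimension-$k$ submanifold of $N$ whose inclusion is Poincar\'e dual to $g^*u=\tau\circ g$. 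The embedding $V\hookrightarrow N$ is, in particular, an immersion, realizing $(N,\tau\circ g)\sim(M,\alpha)$ up to cobordism.

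I expect the main obstacle to be the cohomological computation in the first step: Thom's splitting is a \emph{spectrum-level} statement, so at the fixed space level $k$ one must carefully verify that the unstable truncations of $H^*(MO(k);\Z_2)$ and $H^*(K(\Z_2,k);\Z_2)$ remain compatible and that $\tau^*$ stays injective on polynomial products of generators (not just on the $\mathcal{A}$-orbit of $\iota_k$). As a byproduct, the transversality step actually yields embedded submanifolds, so this argument proves the stronger conclusion that every pair $(M,\alpha)$ is realizable by an \emph{embedding} up to cobordism.
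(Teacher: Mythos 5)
Your reduction to showing surjectivity of $\tau_*\colon \mathfrak N_*(MO(k))\to\mathfrak N_*(K)$ is the crux of your argument, and it is where the proof breaks down: for $k>1$ this map is \emph{not} surjective, and the dual claim that $\tau^*\colon H^*(K;\Z_2)\to H^*(MO(k);\Z_2)$ is injective is false. A dimension count makes this clear. The ring $H^*(MO(k);\Z_2)\cong w_k\cdot\Z_2[w_1,\dots,w_k]$ has Poincar\'e series $t^k/\prod_{i=1}^k(1-t^i)$, which grows only polynomially in the degree $n$ (on the order of $n^{k-1}$). On the other hand, $H^*(K(\Z_2,k);\Z_2)$ is polynomial on infinitely many generators $Sq^I\iota_k$ ($I$ admissible, $e(I)<k$) as soon as $k\ge 2$ (for instance all the length-$m$ admissibles $Sq^{2^{m-1}}\cdots Sq^2 Sq^1$ have excess $1$), so its Hilbert function grows superpolynomially. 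Hence for $n$ large $\dim_{\Z_2}H^n(K)>\dim_{\Z_2}H^n(MO(k))$ and $\tau^*$ cannot be injective. The worry you flag at the end --- whether $\tau^*$ is injective on \emph{products} of the $Sq^I\iota_k$, not merely on the $\mathcal A$-orbit of $\iota_k$ --- is therefore not a technicality to check but the exact point of failure. (A sanity check: if your argument were correct, the transversality step would prove that every pair $(M,\alpha)$ is realizable by an \emph{embedding} up to cobordism; but this is known to be false for $k>1$, and is in fact precisely the kind of statement that Theorem~\ref{thm:singular} and the result of \cite{GrantSzucs} rule out.)

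The way the paper circumvents this is to replace $MO(k)$ by the free infinite loop space $\Gamma MO(k)=\Omega^\infty S^\infty MO(k)$, which classifies cobordism classes of codimension-$k$ \emph{immersions}. Its homology $H_*(\Gamma MO(k))$ is a polynomial ring on $H_*(MO(k))$ \emph{together with} all the Kudo--Araki operations applied to those classes; this extra supply of generators is what makes surjectivity possible. Because $u\colon\Gamma MO(k)\to K$ is an infinite loop map, $u_*$ is a ring map, and surjectivity of $u_*$ on $H_*$ reduces to surjectivity onto the indecomposables $QH_*(K)$ of the image of $H_*(MO(k))$ alone. Dualizing via Milnor--Moore, this becomes injectivity of the composite $PH^*(K)\hookrightarrow H^*(K)\xrightarrow{\tau^*}H^*(MO(k))$, i.e.\ linear independence of the set $\{Sq^I w_k : I\text{ admissible},\ e(I)\le k\}$ in $(w_k)\subset\Z_2[w_1,\dots,w_k]$ --- a genuinely unstable freeness result due to Pengelley and Williams \cite{PW}, and nothing like the stable splitting of $MO$ or the Wu formula alone will deliver it. So the primitives do inject, but the whole of $H^*(K)$ does not; correspondingly you obtain realization by immersions, and cannot upgrade to embeddings.
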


For conciseness, (co)homology coefficients $\Z_2$ will be omitted and $K$ will stand for $K(\Z_2,k)$.

In what follows, $MO(k)$ denotes as usual the Thom space of the universal vector bundle over $BO(k)$, and for any space $X$ we denote by $\Gamma X$ the space $\Omega^\infty S^\infty X = \lim_{N \to \infty} \Omega^N S^N X$. Recall that $\Gamma MO(k)$ is the classifying space of codimension $k$ immersions, in particular, the group of cobordism classes of codimension $k>0$ immersions into a fixed closed manifold $P$ (where cobordisms are codimension $k$ immersions into $P \times [0,1]$) is isomorphic to the group of homotopy classes $[P,\Gamma MO(k)]$.


It is well-known that $\Gamma MO(k)$ is stably equivalent to a bouquet that contains $MO(k)$ (i.e. there is a space $Y$ such that $\Gamma MO(k) \underset{stably}{\cong} MO(k) \vee Y$). Hence $H^*(MO(k))$ embeds naturally into $H^*(\Gamma MO(k))$. In particular the Thom class $u_k \in H^k(MO(k))$ can be considered (uniquely, since $Y$ is known to be $2k-1$-connected) as a cohomology class of $\Gamma MO(k)$. Denote by $u$ the corresponding map into $K$, that is, $u: \Gamma MO(k) \to K$ has the property that $u^*(\iota_k) = u_k$, where $\iota_kl \in H^k(K)$ is the fundamental class.

Alternatively, we may use the universal property of the functor $\Gamma$ that is as follows (\cite[p. 39.]{CLM}, \cite[pp.42--43.]{M}): for any map $f:X\to Y$ from a compactly generated Hausdorff space $X$ to an infinite loop space $Y$ there is a homotopically unique extension $\hat f: \Gamma X \to Y$ that is an infinite loop map. Applying this property to $u_k$ yields the map $u$.

For any $P$ the map $u^P_*: [P,\Gamma MO(k)] \to [P,K]$ induced by $u$ associates to (a cobordism class of) an immersion the Poincar\'e dual of the homology class represented by the immersion.

This shows that Theorem \ref{thm:main} has the following equivalent reformulation:

\begin{spec}
The map $u: \Gamma MO(k) \to K$ induces an epimorphism of the bordism groups in any dimension. That is, for any $n$
$$
u_*: \mathfrak N_n(\Gamma MO(k)) \to \mathfrak N_n(K)
$$
is onto.
\end{spec}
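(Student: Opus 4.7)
The plan is to reduce the bordism statement to a statement in mod-$2$ ordinary homology via Thom's spectrum splitting, and then to deduce the homological surjectivity from the fact that $u$ is an $H$-map.

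By Thom's splitting $MO\simeq\bigvee_i\Sigma^{n_i}H\Z_2$ of the unoriented Thom spectrum, for every space $Y$ one has a natural isomorphism $\mathfrak N_n(Y)\cong\bigoplus_i H_{n-n_i}(Y;\Z_2)$ under which $u_*$ is identified with the direct sum of its effect on $\Z_2$-homology. Hence it suffices to show that $u_*\colon H_*(\Gamma MO(k);\Z_2)\to H_*(K;\Z_2)$ is surjective in every degree.

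Since $u$ is constructed via the universal property of $\Gamma$ as an infinite loop map, in particular as an $H$-map, $u_*$ is a homomorphism of Pontryagin Hopf algebras, so its image is a subalgebra of $H_*(K;\Z_2)$. Under the unit $MO(k)\hookrightarrow\Gamma MO(k)$ the map $u$ restricts to $u_k$, so this subalgebra contains the image of $(u_k)_*\colon H_*(MO(k);\Z_2)\to H_*(K;\Z_2)$, which is computed by dualising Thom's formula $u_k^*(Sq^I\iota_k)=Sq^I u_k=u_k\cdot P_I(w_1,\dots,w_k)$. Combining Serre's presentation $H^*(K;\Z_2)=\Z_2[Sq^I\iota_k\mid I\text{ admissible},\ \mathrm{excess}(I)<k]$ with the dual description of $H_*(K;\Z_2)$ as having algebra generators $\gamma_{2^j}((Sq^I\iota_k)^\vee)$ (the $2^j$-th divided powers of the duals of Serre's generators), and using the calculation $u_k^*((Sq^I\iota_k)^{2^j})=u_k\cdot w_k^{2^j-1}P_I^{2^j}$ (from $u_k^2=u_k w_k$), one verifies that each such algebra generator is---up to lower-degree corrections---in the image of $(u_k)_*$; the corrections are resolved by Pontryagin products of classes of smaller degree which have already been produced.

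The main technical obstacle I foresee is this last bookkeeping. Because $u_k^*$ is eventually non-injective in high degrees (the Poincar\'e series of $H^*(MO(k);\Z_2)$ grows only polynomially while that of $H^*(K;\Z_2)$ grows super-polynomially), the algebra generators $\gamma_{2^j}((Sq^I\iota_k)^\vee)$ are not direct images of single basis elements of $H_*(MO(k);\Z_2)$, but only so modulo a triangular system of corrections indexed by Serre's monomials. Once this triangular induction on an appropriate ordering of the Serre basis is organised, the subalgebra generated by $(u_k)_*(H_*(MO(k);\Z_2))$ is seen to equal $H_*(K;\Z_2)$, and the surjectivity of $u_*$---and thereby the theorem---follows.
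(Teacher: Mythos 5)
Your reduction steps match the paper's in substance: first pass from bordism to $\Z_2$-homology (you via Thom's splitting of $MO$, the paper via the Conner--Floyd isomorphism $\mathfrak N_*(X)\cong H_*(X;\Z_2)\otimes\mathfrak N_*$ --- these are the same fact), then use that $u$ is an infinite loop map, hence $u_*$ is an algebra map, to reduce surjectivity of $u_*$ to the statement that the image of $(u_k)_*\colon H_*(MO(k))\to H_*(K)$ hits a set of Pontryagin-algebra generators of $H_*(K)$. Up to this point the proposal is correct and essentially identical to the paper.

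The genuine gap is in what follows. The paper makes the crucial next move precise: hitting algebra generators is equivalent to surjectivity of $\varphi\colon \overline H_*(MO(k))\to QH_*(K)$, which by Milnor--Moore dualizes to injectivity of $u_k^*$ restricted to $PH^*(K)=\Z_2\langle Sq^I\iota_k : I \text{ admissible}, e(I)\le k\rangle$, i.e.\ to the linear independence of $\{Sq^I w_k : I \text{ admissible}, e(I)\le k\}$ inside $(w_k)\subset\Z_2[w_1,\dots,w_k]$. This linear independence is nontrivial and is the actual content of the theorem; the paper obtains it from Pengelley--Williams' result that the Steenrod algebra acts freely unstably on $w_k$ in $H^*(BO(k))$. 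Your proposal never states, let alone proves, this independence. Instead you describe a ``triangular system of corrections'' among Serre monomials and assert it can be organised, but that is not where the difficulty lies: once the images $u_k^*((Sq^I\iota_k)^{2^j})$ of the primitives are known to be linearly independent, surjectivity onto indecomposables follows formally with no correction bookkeeping at all, and if they are not linearly independent the argument fails outright. Your aside that $u_k^*$ fails to be injective on all of $H^*(K)$ is true but beside the point --- what is needed is injectivity only on the primitives $PH^*(K)$, and it is exactly this restricted injectivity that must be established and that your sketch leaves unproved.

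So: same framework, but the heart of the argument is missing, and the ``main technical obstacle'' paragraph misdiagnoses where the work is. To complete the proof along your lines you would still have to prove, or cite, the freeness of the unstable Steenrod action on $w_k$ (or an equivalent linear-independence statement), exactly as the paper does.
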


\begin{proof}
It is well-known (\cite{ConnerFloyd}) that there is an isomorphism $H_*(X; \Z_2) \otimes \mathfrak N_* \to \mathfrak N_*(X)$, natural in $X$, defined by taking a representative $[\hat \alpha : M_\alpha \to X] \in \mathfrak N_*(X)$ for all elements $\alpha$ of a basis of $H_*(X)$ and mapping $\sum_j \alpha_j \otimes [N_j]$ to $\sum_j [\hat \alpha_j \circ pr_j : M_{\alpha_j} \times N_j \to X]$, where $pr_j :  M_{\alpha_j} \times N_j \to M_{\alpha_j}$ is the projection. Hence a map induces epimorphism of the (unoriented) bordism groups if and only if it does so in the $\Z_2$-homology groups.

For later use, recall that for any space $X$ the ring $H_*(\Gamma X)$ is a polynomial ring (multiplication being the Pontryagin product) in variables $x_\lambda$, $y_{I,\lambda}$, where $\{ x_\lambda \}_\lambda$ is a homogeneous basis of $H_*(X)$ and $y_{I,\lambda}$ are further variables defined using Kudo-Araki operations as $y_{I,\lambda} = Q^I x_\lambda$ (their precise description will be unimportant in our argument).

In order to show that
$$
u_* : H_*(\Gamma MO(k)) \to H_*(K)
$$
is onto it is enough to show that the composition
$$
\varphi \overset{\operatorname{def}}{:} \overline H_*(MO(k)) \overset{(u_k)_*}{\to} \overline H_*(K) \overset{p}{\to} Q(H_*(K)) = \overline H_*(K)/\mu\left(\overline H_*(K) \otimes \overline H_*(K)\right)
$$
is onto, where $\mu: H_*(K) \otimes H_*(K) \to H_*(K)$ is the multiplication map and $p$ is the natural projection onto the quotient group of indecomposables. Indeed, assume that $\varphi$ is onto and for all $j$ choose elements in $H_j(K)$ such that they form a (linear) basis in $\overline H_j(K)/\mu\left(\overline H_j(K) \otimes \overline H_j(K) \right)$. It is easy to see by induction on $j$ that the chosen elements generate $\overline H_*(K)$ multiplicatively and hence the subring of $H_*(\Gamma MO(k))$ generated by the preimages of these elements is mapped onto the entire $H_*(K)$ (here we use that $u_*$ is a ring homomorphism, since $u$ is an infinite loop map).

Hence to prove Theorem \ref{thm:main} we have to show that $\varphi:H_*(MO(k)) \to QH_*(K)$ is onto. This is equivalent to the dual homomorphism $\varphi^*$ being injective. By \cite[Proposition 3.10.]{MM}, the dual of $QH_*(K)$ is $PH^*(K)$, the submodule of primitive elements of the Hopf algebra $H^*(K)$. This latter group is known to be
$$
PH^*(K) = \Z_2\left\langle Sq^I \iota_k : I \text{ admissible of excess } e(I) \leq k \right\rangle,
$$
the vector space over $\Z_2$ freely generated by the $Sq^I \iota_k$ (see eg. \cite[p. 23.]{Clement}). The dual of $H_*(MO(k))$ is $H^*(MO(k))$ and can be identified with the ideal generated by $w_k$ in $\Z_2[w_1,\dots,w_k]$ ($w_k$ corresponds to the Thom class $u_k$). The map $\varphi^*$ maps $\iota_k$ to $u_k$ and then to $w_k$, and commutes with the action of the Steenrod algebra, allowing to calculate the image of $\varphi^*$.
\par
Finally, we need to show that the set $\left\{ Sq^I(w_k) : I\text{ is admissible with } e(I)\leq k\right\}$ is linearly independent in the ideal $(w_k) \subset \Z_2[w_1,\dots,w_k]$. This is the immediate consequence of \cite[Remark 2.4.]{PW} that shows that the Steenrod algebra acts freely unstably on $w_k$ in $H^*(BO(k))$, and this finishes the proof of Theorem \ref{thm:main}.
\end{proof}

\section{Non-realizability up to cobordism by singular maps of finite complexity}\label{sec:singular}

Recall some definitions from singularity theory that are necessary for the formulation of Theorem \ref{thm:singular}.
\par
{\sc Definition:} Fix a natural number $k \geq 1$ and consider equivalence classes of germs $\eta: (\R^{n-k},0) \to (\R^n,0)$, $n\geq k$, up to left-right equivalence and stabilization, that is, we consider $\eta$ to be equivalent to $\eta \times id_{\R^1} : (\R^{n-k+1},0) \to (\R^{n+1},0)$. An equivalence class is called a (codimension $k$) \emph{local singularity} (even if its rank is maximal).
\par
{\sc Definition:} A \emph{multisingularity} is a finite multiset (set with elements equipped with multiplicities) of local singularities.
\par
{\sc Definition:} Let $f: M\to N$ be a smooth map such that for any $y\in N$ the preimage $f^{-1}(y)$ is a finite set. For $y\in N$ and $f^{-1}(y) = \left\{x_1, \dots,x_m\right\}$ let $\left[f_{x_j}\right]$ denote the local singularity class of the germ $f$ at $x_j$. The multiset $\left\{ \left[f_{x_1}\right],\dots,\left[f_{x_m}\right] \right\}$ is called the \emph{multisingularity of $f$ at $y$}.
\par
{\sc Definition:} Let $\tau$ be a set of multisingularities. The map $f$ is said to be a \emph{$\tau$-map} if its multisingularity at any point $y\in N$ belongs to $\tau$.

\begin{thm}\label{thm:singular}
Let $\tau$ be any finite set of multisingularities of codimension $k>1$ stable maps and let $\mathcal F$ be the class of $\tau$-maps. Then the class $\mathcal F$ is insufficient for realizing up to cobordism all codimension $k$ homology classes in manifolds. That is, for any $k>1$ there is a pair $(M,\alpha)$ with $M$ a smooth manifold and $\alpha \in H^k(M)$ such that $(M,\alpha)$ is not $\mathcal F$-realizable up to cobordism.
\end{thm}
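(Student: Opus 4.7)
The plan is to parallel the strategy used for Theorem 1': exhibit a classifying space $X_\tau$ for cobordism classes of codimension $k$ $\tau$-maps, reduce the assertion to the failure of surjectivity of a natural map in bordism, and then via the Conner-Floyd isomorphism pass to a purely $\Z_2$-homological question.

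First, I would invoke Sz\H{u}cs's classifying space theory to produce a space $X_\tau$ together with a natural map $\nu_\tau : X_\tau \to K$ such that $[P, X_\tau]$ is the cobordism group of codimension $k$ $\tau$-maps into $P$ and $\nu_\tau$ assigns to a cobordism class $[f:V\to P]$ the classifying map of the Poincar\'e dual of $f_*[V]$. Under this identification, a pair $(M,\alpha)$ is $\mathcal{F}$-realizable up to cobordism if and only if the class of $\alpha:M\to K$ in $\mathfrak{N}_n(K)$ lies in the image of $(\nu_\tau)_*: \mathfrak{N}_n(X_\tau) \to \mathfrak{N}_n(K)$. Since every element of $\mathfrak{N}_n(K)$ is represented by such a pair, the theorem is equivalent to the statement that $(\nu_\tau)_*$ fails to be surjective in some dimension $n$.

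Next, applying the Conner-Floyd isomorphism $\mathfrak{N}_*(-) \cong H_*(-;\Z_2) \otimes \mathfrak{N}_*$ just as in the proof of Theorem 1', one reduces the question to whether $(\nu_\tau)_*: H_*(X_\tau;\Z_2) \to H_*(K;\Z_2)$ is surjective, equivalently (by $\Z_2$-duality in each degree) whether $\nu_\tau^*: H^*(K) \to H^*(X_\tau)$ is injective. So it suffices to exhibit a nonzero cohomology class on $K$ that pulls back to zero in $X_\tau$.

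The finiteness of $\tau$ enters precisely here, and this is the main obstacle. The space $X_\tau$ is assembled from finitely many singularity strata, each modelled on a classifying space $BG_\eta$ of the maximal compact symmetry group $G_\eta$ of a local singularity $\eta \in \tau$; correspondingly $H^*(X_\tau)$ admits a finite filtration with controlled subquotients. The universal class $c := \nu_\tau^*(\iota_k) \in H^k(X_\tau)$ therefore satisfies unstable Steenrod relations that $\iota_k$ itself does not in $H^*(K) = \Z_2[Sq^I \iota_k]$. By a refinement of the argument of \cite{GrantSzucs}, for every finite $\tau$ one can identify an admissible sequence $I$ for which $Sq^I c = 0$ in $H^*(X_\tau)$ while $Sq^I \iota_k \neq 0$ in $H^*(K)$; the class $Sq^I \iota_k$ then lies in $\ker \nu_\tau^*$ and yields, via the Conner-Floyd reduction, a pair $(M,\alpha)$ that is not $\mathcal{F}$-realizable up to cobordism. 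The essential point to verify is that the obstruction found in \cite{GrantSzucs} can indeed be expressed as such a universal Steenrod-algebra relation on $c$, rather than as an ad hoc obstruction for a particular target manifold.
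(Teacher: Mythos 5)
Your framework matches the paper's: both arguments pass through the classifying space $X_\tau$ for cobordism of $\tau$-maps, interpret realizability up to cobordism as membership in the image of $(\nu_\tau)_* \colon \mathfrak{N}_*(X_\tau) \to \mathfrak{N}_*(K)$, and then use the Conner--Floyd splitting $\mathfrak{N}_*(-) \cong H_*(-;\Z_2)\otimes\mathfrak{N}_*$ to reduce to a question about $\Z_2$-homology. Up to this point your reduction is correct and identical to the paper's.

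The divergence, and the gap, is in the final step. The paper finishes by quoting a dimension count from \cite{GrantSzucs}: for $k>1$ and $j$ sufficiently large, $\dim_{\Z_2} H_j(X_\tau) < \dim_{\Z_2} H_j(K)$, so $(\nu_\tau)_*$ cannot be onto in degree $j$, full stop. You instead propose to find an admissible $I$ with $Sq^I \iota_k \neq 0$ in $H^*(K)$ but $Sq^I c = 0$ in $H^*(X_\tau)$, i.e.\ to exhibit a generator of the polynomial algebra $H^*(K)$ in $\ker \nu_\tau^*$. This is a strictly stronger claim than what is needed, and it is not implied by non-injectivity of $\nu_\tau^*$: a ring map out of a polynomial algebra can fail to be injective while every polynomial generator maps to a nonzero element (the kernel may consist entirely of decomposables, as in $\Z_2[x,y] \to \Z_2[t]$, $x,y \mapsto t$). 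You flag this yourself as ``the essential point to verify,'' but without the verification the argument is incomplete, and no mechanism is offered for producing such an $I$ from the finiteness of $\tau$. The dimension count sidesteps this entirely: since $H_*(K)$ is a polynomial $\Z_2$-algebra on the $Sq^I\iota_k$ it has super-polynomial growth, while the finitely many singularity strata of $X_\tau$ force its Betti numbers to grow more slowly, and surjectivity in homology already fails for gross numerical reasons. I would replace your Steenrod-relation step with the dimension comparison; the rest of your outline stands.
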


\begin{proof}
The proof given in \cite[Theorem 1.3.]{GrantSzucs} for non-realizability of homologies by $\tau$-maps also proves the stronger statement of Theorem \ref{thm:singular}. In that proof there was a classifying space $X_\tau$ for $\tau$-maps (analogously to $\Gamma MO(k)$ being the classifying space for immersions). $X_\tau$ has a single nonzero element in its first nontrivial (reduced) cohomology group, $H^k(X_\tau)$, which can be called the Thom class $u_\tau: X_\tau \to K$. If any pair $(M,\alpha)$ could be realizable up to cobordism by $\tau$-maps, then the map $u_\tau$ would induce an epimorphism $\left(u_\tau\right)_*:\mathfrak N_*(X_\tau) \to \mathfrak N_*(K)$ between the unoriented bordism groups or, equivalently, between the homology groups (using the same argument as in the proof of Theorem \ref{thm:main}). But \cite{GrantSzucs} shows that for any sufficiently high dimension $j$ (under the assumption that $k>1$) we have ${\dim_{\Z_2} H_j(X_\tau) < \dim_{\Z_2} H_j(K)}$, hence $\left(u_\tau \right)_*: H_j(X_\tau) \to H_j(K)$ cannot be surjective.
\end{proof}

{\sc Remark:} In particular, embeddings or immersions with self-intersection multiplicity bounded by a fixed number are insufficient for realizing all homology classes in manifolds even up to cobordism.


\begin{thebibliography}{99}

\bibitem{Clement} A. Clement:
{\it Integral Cohomology of Finite Postnikov Towers,}
Th\`ese de doctorat, Universit\'e de Lausanne, 2002

\bibitem{CLM} F.R. Cohen, T.J. Lada, J.P. May:
{\it The homology of iterated loop spaces,}
Lecture Notes in Mathematics {\bf 533}, Springer, 1976

\bibitem{ConnerFloyd} P.E. Conner, E.E. Floyd:
{\it Differentiable periodic maps,}
Bull. Amer. Math. Soc. {\bf 68 (2)} (1962), 76--86.

\bibitem{Eilenberg} S. Eilenberg:
{\it Problems in topology,}
Ann. Math. {\bf 50} (1949), 246--260.

\bibitem{GrantSzucs} M. Grant, A. Sz\H ucs:
{\it On realising homology classes by maps of restricted complexity,}
Bull. London Math. Soc. {\bf 45 (2)} (2013), 329--340.

\bibitem{M} J.P. May:
The geometry of iterated loop spaces,
Lecture Notes in Mathematics {\bf 271}, Springer, 1972

\bibitem{MM} J.W. Milnor, J.C. Moore:
{\it On the structure of Hopf algebras,}
Annals of Math. {\bf 81 (2)} (1965), 211--264.

\bibitem{PW} D.J. Pengelley, F. Williams:
{\it Global structure of the mod two symmetric algebra, $H^*(BO; \mathbb F_2)$, over the Steenrod
Algebra,}
Alg. Geom. Topol. {\bf 3} (2003), 1119--1139.

\end{thebibliography}
\end{document}